\newcommand{\IN}{\mathbb N}
\newcommand{\IR}{\mathbb R}
\newcommand{\IQ}{\mathbb Q}
\newcommand{\id}{\mathrm{id}}
\newcommand{\w}{\omega}
\newcommand{\C}{\mathcal C}
\newcommand{\Ra}{\Rightarrow}
\newcommand{\cl}{\mathrm{cl}}
\newcommand{\concat}{\hat{\phantom{v}}}
\newtheorem{theorem}{Theorem}[section]
\newtheorem{lemma}[theorem]{Lemma}
\newtheorem{proposition}[theorem]{Proposition}
\newtheorem{problem}[theorem]{Problem}
\theoremstyle{definition}
\newtheorem{ex}[theorem]{Example}
\begin{document}

\title[Weakly discontinuous and resolvable functions]{Weakly discontinuous and resolvable functions\\ between topological spaces}
\author{Taras Banakh}
\author{Bogdan Bokalo}
\address{Instytut Matematyki, Jan Kochanowski University in Kielce (Poland) and\newline
Department of Mathematics, Ivan Franko National University of Lviv (Ukraine)}
\email{t.o.banakh@gmail.com, b.m.bokalo@gmail.com}
\dedicatory{Dedicated to the memory of Prof. Dr.~L.~Michael Brown}
\subjclass{54C08}
\keywords{Weakly discontinuous function, resolvable function, Preiss-Simon space}
\begin{abstract} We prove that a function $f:X\to Y$ from a first-countable (more generally, Preiss-Simon) space $X$ to a regular space $Y$ is weakly discontinuous (which means that every subspace $A\subset X$ contains an open dense subset $U\subset A$ such that $f|U$ is continuous) if and only if $f$ is open-resolvable (in the sense that for every open subset $U\subset Y$ the preimage $f^{-1}(U)$ is a resolvable subset of $X$) if and only if $f$ is resolvable (in the sense that for every resolvable subset $R\subset Y$ the preimage $f^{-1}(R)$ is a resolvable subset of $X$). For functions on metrizable spaces this characterization was announced (without proof) by Vinokurov in 1985.
\end{abstract}

\maketitle

\section{Introduction and Main Result}\label{s7}

In this paper we present a proof of a characterization of weakly discontinuous functions announced (without proof) by Vinokurov in \cite{Vino}.

A function $f:X\to Y$ between topological spaces is called {\em weakly discontinuous} if every subspace $A\subset X$ contains a dense open subset $U\subset A$ such that the restriction $f|U$ is continuous. It is well-known that for weakly discontinuous maps $f:X\to Y$ and $g:Y\to Z$ the composition $g\circ f:X\to Z$ is weakly discontinuous. Weakly discontinuous functions were introduced by Vinokurov \cite{Vino}. Many properties of functions, equivalent to the weak discontinuity were discovered in \cite{AB,BKMM,BM,CL1,CL2,JR82,KM,Kir,O'M,Sol,Vino}. By \cite[Theorem 8]{Vino}, a function $f:X\to Y$ from a metrizable space $X$ to a regular space $Y$ is weakly discontinuous if and only if for every open set $U\subset Y$ the preimage $f^{-1}(U)$ is a resolvable subset of $X$. We recall \cite[I.12]{Kur} that a subset $A$ of a topological space $X$ is {\em resolvable} if for every closed subset $F\subset X$ the set $\overline{F\cap A}\cap \overline{F\setminus A}$  is nowhere dense in $F$. Observe that a subset $A\subset X$ is resolvable if and only if its characteristic function $\chi_A:X\to\{0,1\}$ is weakly discontinuous.
It is known \cite[I.12]{Kur} that the family resolvable subsets of a topological space $X$ is closed under intersections, unions, and complements.

A function $f:X\to Y$ between topological spaces is called ({\em open-}){\em resolvable} if for every (open) resolvable subset $R\subset Y$ the preimage $f^{-1}(R)$ is a resolvable subset of $X$. It is clear that each resolvable function is open-resolvable.

\begin{proposition}\label{p1} If a function $f:X\to Y$ between topological spaces is weakly discontinuous, then $f$ is resolvable.
\end{proposition}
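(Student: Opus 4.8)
The plan is to reduce the proposition to two facts already recorded in the introduction: (i) the composition of two weakly discontinuous functions is weakly discontinuous, and (ii) a subset $A\subset X$ is resolvable if and only if its characteristic function $\chi_A:X\to\{0,1\}$ is weakly discontinuous (where $\{0,1\}$ carries the discrete topology, which is regular). Neither of these facts depends on Proposition~\ref{p1}, so the argument is not circular.

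First I would fix an arbitrary resolvable subset $R\subset Y$. By fact (ii), the characteristic function $\chi_R:Y\to\{0,1\}$ is weakly discontinuous. Since $f:X\to Y$ is weakly discontinuous by hypothesis, fact (i) gives that the composition $\chi_R\circ f:X\to\{0,1\}$ is weakly discontinuous. Next I would observe the pointwise identity $\chi_R\circ f=\chi_{f^{-1}(R)}$, which holds because for $x\in X$ we have $\chi_R(f(x))=1$ exactly when $f(x)\in R$, i.e. exactly when $x\in f^{-1}(R)$. Consequently $\chi_{f^{-1}(R)}$ is weakly discontinuous, and applying the ``only if'' direction of fact (ii) we conclude that $f^{-1}(R)$ is a resolvable subset of $X$. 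As $R$ was an arbitrary resolvable subset of $Y$, this means precisely that $f$ is resolvable.

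Since every step is an immediate consequence of the cited facts, there is essentially no obstacle; the only points requiring a moment's care are that the two-point space $\{0,1\}$ must be taken discrete (so that fact (ii) applies and continuity of a restriction $\chi_A|U$ really does encode $U\cap A$ being clopen in $U$) and that facts (i) and (ii) are invoked in a form logically prior to the present statement. If one wished to avoid quoting the composition stability, one could argue directly: for a closed $F\subset X$ and a nonempty relatively open $W\subset F$, apply the weak discontinuity of $f$ to the subspace $W$ to obtain a dense open $U\subset W$ with $f|U$ continuous, and then use that $(f|U)^{-1}(R)$ is resolvable in $U$ to contradict the assumption that both $F\cap f^{-1}(R)$ and $F\setminus f^{-1}(R)$ are dense in $W$; but this merely reproves fact (i) in the case needed here.
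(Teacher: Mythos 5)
Your proof is correct and is essentially identical to the paper's: both fix a resolvable $R\subset Y$, use the characterization of resolvability of a set via weak discontinuity of its characteristic function, and apply the stability of weak discontinuity under composition to conclude that $\chi_{f^{-1}(R)}=\chi_R\circ f$ is weakly discontinuous, hence $f^{-1}(R)$ is resolvable.
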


\begin{proof} If a subset $A\subset Y$ is resolvable, then its characteristic function $\chi_A:Y\to\{0,1\}$ is weakly discontinuous. Since the weak discontinuity is preserved by compositions (see, e.g., \cite[4.1]{BB}), the composition $g=\chi_A\circ f:X\to\{0,1\}$ is weakly discontinuous, which implies that the set $g^{-1}(1)=f^{-1}(A)$ is resolvable in $X$.
\end{proof}

By \cite[Theorem 8]{Vino}, for functions between metrizable spaces, Proposition~\ref{p1} can be reversed. However the paper \cite{Vino} contains no proof of this important fact. In this paper we present a proof of this Vinokurov's characterization in a more general case of functions defined on Preiss-Simon spaces.

We define a topological space $X$ to be {\em Preiss-Simon} at a point $x\in X$ if for any subset $A\subset X$ with $x\in \overline{A}$ there is a sequence $(U_n)_{n\in\w}$ of non-empty open subsets of $A$ that converges to $x$ in the sense that each neighborhood of $x$ contains all but finitely many sets $U_n$. By $PS(X)$ we denote the set of points $x\in X$ at which $X$ is Preiss-Simon. A topological space $X$ is called a {\em Preiss-Simon} space if $PS(X)=X$ (that is $X$ is Preiss-Simon at each point $x\in X$).

It is clear that each first-countable space is Preiss-Simon and each Preiss-Simon space is Fr\'echet-Urysohn. A less trivial fact due to Preiss and Simon \cite{PS} asserts that each Eberlein compact space is Preiss-Simon.

A base $\mathcal B$ of the topology of a space $X$ will be called {\em countably additive} if the union $\cup\C$ of any countable subfamily $\C\subset\mathcal B$ belongs to $\mathcal B$.

A function $f:X\to Y$ between topological spaces will be called {\em base-resolvable} if there exists a countably additive base $\mathcal B$ of the topology of $Y$ such that for every set $B\subset Y$ the preimage $f^{-1}(B)$ is a resolvable subset of $X$.

It is clear that for any function $f:X\to Y$ we have the implications:
$$\mbox{weakly discontinuous $\Ra$ resolvable $\Ra$ open-resolvable $\Ra$ base-resolvable}.$$

For functions on Preiss-Simon spaces these implications can be reversed, which is proved in the following characterization. For functions on metrizable spaces it was announced (without written proof) by Vinokurov in \cite[Theorem 8]{Vino}.

\begin{theorem}\label{main} For a functions $f:X\to Y$ from a Preiss-Simon space $X$ to a regular space $Y$ the following conditions are equivalent:
\begin{enumerate}
\item $f$ is weakly discontinuous;
\item $f$ is resolvable;
\item $f$ is open-resolvable;
\item $f$ is base-resolvable.
\end{enumerate}
\end{theorem}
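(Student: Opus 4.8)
The implications (1)$\Ra$(2)$\Ra$(3)$\Ra$(4) are already given in the excerpt (the first from Proposition~\ref{p1}, the last two being trivial since every open set is resolvable and any countably additive base consists of open sets). So the entire content is the single implication (4)$\Ra$(1): from base-resolvability to weak discontinuity. The plan is to fix a countably additive base $\mathcal B$ of $Y$ witnessing base-resolvability, fix an arbitrary nonempty subspace $A\subset X$, and produce a dense open subset $U\subset A$ on which $f|U$ is continuous.

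**The key construction.**

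First I would reduce to showing that $A$ contains at least one nonempty \emph{relatively open} subset on which $f$ is continuous; a standard Zorn's-lemma / maximal-disjoint-family argument then yields a dense open such set, and since this works for every $A$, weak discontinuity follows. So fix $A$ and suppose, for contradiction, that $f|V$ is discontinuous for every nonempty open $V\subset A$. Using the Preiss-Simon property and regularity of $Y$, I would build, by induction, a decreasing (or tree-like) sequence of nonempty open subsets of $A$ together with basic open sets $B_n\in\mathcal B$ in $Y$ that ``oscillate'': at each stage, because $f$ is not continuous on the current piece, there is a point where the oscillation of $f$ is bounded away from zero relative to the finite topological information gathered so far, and the Preiss-Simon property supplies a sequence of nonempty open subsets converging to that point, with $f$-images split between a basic neighborhood of $f(x)$ and its complement. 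Taking $B=\bigcup_n B_n$ (legitimate since $\mathcal B$ is countably additive, so $B\in\mathcal B$ and thus $f^{-1}(B)$ is resolvable in $X$), I arrange that inside some nonempty open $W\subset A$ the closure of $f^{-1}(B)\cap W$ and the closure of $(W\setminus f^{-1}(B))$ both equal $W$ — i.e.\ $f^{-1}(B)\cap W$ is not resolvable in $W$, hence $f^{-1}(B)$ is not resolvable in $X$ (resolvability is inherited by subspaces), contradicting base-resolvability.

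**Where the real work lies.**

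The main obstacle is making the oscillation argument produce a genuinely non-resolvable preimage rather than merely a dense-in-itself bad set. Resolvability requires controlling $\overline{F\cap f^{-1}(B)}\cap\overline{F\setminus f^{-1}(B)}$ for \emph{closed} $F$, so I need the two ``colors'' to interweave densely in a neighborhood in $X$. This is exactly where Preiss-Simon (not mere Fréchet-Urysoheness) is essential: it delivers open sets $U_n$ converging to $x$, and an open set is not destroyed by the topology — the $U_n$ give open-in-$A$ witnesses of density of one color near $x$, while a symmetric choice on a competing subsequence gives density of the other color. The bookkeeping — threading a single $B\in\mathcal B$ through countably many stages while ensuring both colors stay dense on a fixed $W$ — is the delicate part, and countable additivity of $\mathcal B$ is precisely what lets the infinitely many local choices be amalgamated into one test set. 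Regularity of $Y$ is used to separate $f(x)$ from points mapped outside $B_n$, so that the dichotomy "lands in $B$" vs.\ "lands outside $B$" is robust under small perturbations.

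**Conclusion.** Once the contradiction is reached, every nonempty open $V\subset A$ contains a nonempty open subset on which $f$ is continuous; a maximal disjoint family of such subsets has dense union $U\subset A$ with $f|U$ continuous, establishing (1) and closing the cycle of equivalences.
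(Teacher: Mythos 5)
Your skeleton is the right one and matches the paper's in outline: everything reduces to $(4)\Rightarrow(1)$, the target is a single set $B$ in the countably additive base whose preimage fails resolvability because two ``colors'' are simultaneously dense in some subspace, Preiss--Simon supplies open sets converging to a discontinuity point, and the contradiction is the impossibility of two disjoint dense resolvable sets (the paper's Lemma~\ref{resolve}). But the entire mathematical content of the theorem lies in the construction you describe only in outline, and your sketch passes over the two points where that construction can actually fail. First, mere discontinuity of $f$ at $x$ only says that every neighborhood of $x$ contains points of $D$ mapped outside a fixed neighborhood $W$ of $f(x)$; to run the Preiss--Simon property on the ``wrong-color'' set you need $x\notin\big(\overline{f^{-1}(\overline{W})}\big)^\circ$, i.e.\ failure of \emph{almost continuity} at $x$, and the passage from ``no continuity points of $f|D$'' to ``$D\setminus AC(f)$ is dense'' is not free: in the paper it is Lemma~\ref{7.2}(1),(2), and it already uses base-resolvability (via $AC(f)=WC(f)$) together with regularity. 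Your induction, as described, has no step supplying non-almost-continuity points, so the Preiss--Simon property cannot be applied where you need it.

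Second, and more seriously, your plan uses one growing test set $B=\bigcup_n B_n$ and wants $W\setminus f^{-1}(B)$ to stay dense; but a point you designate as ``outside $B$'' at stage $n$ may be swallowed by a later $B_m$, and nothing in your bookkeeping prevents this. The paper's way around it is precisely the part you defer: it first manufactures a countable set $Q$ of non-almost-continuity points on which $f$ is \emph{injective} and $f(Q)$ is a \emph{discrete} subspace of $Y$ (the tree-indexed induction of Lemma~\ref{7.2a}(3), which in turn needs the $\pi$-base/pigeonhole Lemma~\ref{7.2}(3) and the disjointification Lemma~\ref{7.2a}(2), both again relying on countable additivity and resolvability of basic preimages). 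Discreteness of $f(Q)$ lets one choose pairwise disjoint basic neighborhoods of the image points, split $Q$ into two dense halves $Q_1,Q_2$, and amalgamate \emph{two disjoint} sets $W_1,W_2\in\mathcal B$; disjointness is exactly what keeps each half out of the other preimage forever, so that $\overline{Q}$ contains two disjoint dense resolvable traces. Without an analogue of this image-discreteness (or some other stabilization device), the claim that ``I arrange that both closures equal $W$'' is an assertion, not a proof; note also that ``oscillation bounded away from zero'' has no meaning for a general regular $Y$ --- the paper's substitute is separation of the discrete image points by disjoint basic neighborhoods. So the proposal identifies the correct strategy but has a genuine gap at its core: the content of Lemmas~\ref{7.2} and \ref{7.2a} is assumed rather than established.
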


This theorem will be proved in Section~\ref{s:pf} after some preliminary work made in Section~\ref{lemmas}.

By Theorem~\ref{main}, any open-resolvable map $f:X\to Y$ from a Preiss-Simon space $X$ to a regular space $Y$ is resolvable. We do not know if this implication still holds for any function between regular spaces. The authors are grateful to Sergey Medvedev for turning their attention to this intriguing question.

\begin{problem}[Medvedev]\label{Med} Is each open-resolvable function $f:X\to Y$ between regular spaces resolvable?
\end{problem}

The following example indicates that Problem~\ref{Med} can be difficult and shows that the countable additivity of the base $\mathcal B$ cannot be removed from the definition of a base-resolvable function.

\begin{ex} Let $\IR_{\IQ}$ be the real line endowed with the metrizable topology generated by the countable base $\mathcal B=\big\{(a,b):a<b,\;a,b\in\IQ\big\}\cup \big\{\{q\}:q\in\IQ\big\}$. The identity map $\id:\IR\to\IR_\IQ$ is not (open-)resolvable as the preimage $\IQ=\id^{-1}(\IQ)$ of the open set $\IQ\subset\IR_\IQ$ is not resolvable in $\IR$. Yet, for every basic set $B\in\mathcal B$ the preimage $\id^{-1}(B)$ is a resolvable set in $\IR$.
\end{ex}

\section{Five Lemmas}\label{lemmas}

In this section we shall prove some auxiliary results, which will be used in the proof of Theorem~\ref{main}. For a subset $A$ of a topological space by $\bar A$, $A^\circ$, and $\overline{A}^\circ$ we denote the closure, the interior, and the interior of the closure of $A$ in $X$, respectively. A family $\mathcal B$ of non-empty open subsets of a topological space $X$ is called a {\em $\pi$-base}
if each non-empty open set $U\subset X$ contains some set $B\in\mathcal B$.

Following \cite{BB}, we define a function $f:X\to Y$ between topological spaces to be {\em scatteredly continuous} if for every non-empty subset $A\subset X$ the restriction $f|A$ has a continuity point. It is easy to see that each weakly discontinuous function is scatteredly continuous.
For maps into regular spaces the converse implication is also true (see \cite{AB}, \cite{BM} or \cite[4.4]{BB}):

\begin{lemma}\label{l:scat} A function $f:X\to Y$ from a topological space $X$ to a regular space $Y$ is weakly discontinuous if and only if $f$ is scatteredly continuous.
\end{lemma}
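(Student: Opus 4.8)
The forward implication is immediate: a weakly discontinuous function is scatteredly continuous, since if $A\subset X$ is non-empty, then $A$ contains a dense open subset $U$ with $f|U$ continuous, and any point of $U$ is a continuity point of $f|A$ (continuity of $f|U$ at $x$ relative to the open subset $U$ of $A$ gives continuity of $f|A$ at $x$). This direction needs no regularity of $Y$.

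For the non-trivial converse, assume $f$ is scatteredly continuous and $Y$ is regular. The plan is to fix an arbitrary subspace $A\subset X$ and produce a dense open $U\subset A$ on which $f|A$ is continuous. Replacing $X$ by $A$, it suffices to find a non-empty open $U\subset X$ on which $f$ is continuous and then run a maximality/Zorn argument: let $U$ be the union of all open subsets of $X$ on which $f$ is continuous; the real content is that $U$ is dense, equivalently that the closed set $F=X\setminus U$ has empty interior. If $\Int F\neq\emptyset$, apply scattered continuity to the subspace $F$ (or to $\Int F$): there is a point $x$ that is a continuity point of $f|F$. Here is where regularity of $Y$ enters — I would show that a continuity point of $f|F$ for a \emph{closed} (or regular-open) set $F$ is actually a point near which $f$ is continuous on a neighborhood in $X$; regularity of $Y$ lets one absorb the behavior of $f$ on the ``boundary part'' and on the complement, because one can separate $f(x)$ from the complement of a basic neighborhood by a closed neighborhood and pull back. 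This produces a non-empty open subset of $\Int F$ on which $f$ is continuous, contradicting the definition of $U$. Hence $F$ is nowhere dense, so $U$ is open dense in $X$, as required.

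The main obstacle is precisely this last step: converting a point of continuity of the restriction $f|F$ to genuine local continuity of $f$ on an open set. One must be careful that ``continuity of $f|F$ at $x$'' only controls $f$ along $F$, whereas we need control on a full neighborhood. The standard device (which is why the hypothesis is regularity, not just Hausdorffness) is: given an open $V\ni f(x)$ in $Y$, pick by regularity an open $W$ with $f(x)\in W\subset\overline W\subset V$; continuity of $f|F$ at $x$ gives an open $O\ni x$ in $X$ with $f(O\cap F)\subset W$; then the set $\{y\in O : y\in F \text{ or } y\text{ has a neighborhood mapped into }V\}$ can be shown to contain an open neighborhood of $x$, using that points of $O\setminus F=O\cap U$ lie in the region where $f$ is already continuous. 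Iterating/intersecting over a suitable collection of basic open sets and invoking the structure of $F$ as the complement of the maximal ``good'' open set pins down an open subset of $\Int F$ on which $f$ is continuous. Since this lemma is quoted from \cite{AB}, \cite{BM}, \cite[4.4]{BB}, I would in practice cite those sources rather than reproduce the full argument; the sketch above records the shape of the proof and the exact point where regularity of $Y$ is indispensable.
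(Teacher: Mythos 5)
The paper itself offers no proof of this lemma—it is quoted from \cite{AB}, \cite{BM}, \cite[4.4]{BB}—so your decision to cite those sources is exactly what the authors do, and your forward implication is correct (and indeed needs no regularity). However, the sketch you present as ``the shape of the proof'' of the converse has a genuine gap at precisely the step you flag as the main obstacle, and the device you propose does not close it. After reducing to the maximal good open set $U$ and assuming $W=\Int(X\setminus U)\neq\emptyset$, applying scattered continuity to $F$ or to $W$ only yields a \emph{single} continuity point $x$ of $f|F$ (resp.\ of $f|W$), and no amount of regularity upgrades one continuity point to ``a non-empty open subset of $\Int F$ on which $f$ is continuous'': that upgrade is the entire content of the lemma, and your auxiliary set $\{y\in O: y\in F\text{ or }y\text{ has a neighborhood mapped into }V\}$ need not contain a neighborhood of $x$, because continuity of $f$ at the points of $O\cap U$ tells you nothing about \emph{where} their images lie (they may land far from $V$). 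Likewise ``iterating/intersecting over basic open sets'' is not available for a general regular $Y$ and would at best produce more continuity points, not an open set of them.

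The missing idea is a second, differently aimed application of scattered continuity: not to $F$ or $W$, but to the set $D$ of discontinuity points of $f|W$, which is dense in $W$ by maximality of $U$. Take $x_0\in D$ at which $f|D$ is continuous. Since $f|W$ is discontinuous at $x_0$, fix an open $V_0\ni f(x_0)$ witnessing this, and by regularity an open $V_1$ with $f(x_0)\in V_1\subset\overline{V_1}\subset V_0$; continuity of $f|D$ at $x_0$ gives a neighborhood $O_0$ of $x_0$ with $f(O_0\cap D)\subset V_1$. The witness $V_0$ provides a point $z\in O_0\cap W$ with $f(z)\notin V_0$; then $z\notin D$, so $f|W$ is continuous at $z$, and the open neighborhood $Y\setminus\overline{V_1}$ of $f(z)$ pulls back to a non-empty relatively open subset of $O_0\cap W$ that misses $D$—contradicting the density of $D$ in $W$. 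This is where regularity is really used (to separate $V_1$ from the values near $z$), and it is this two-step argument, rather than promoting a continuity point of $f|F$ to local continuity, that the cited proofs rest on; as written, your sketch would not compile into a proof.
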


We recall that for a topological space $X$ its {\em tightness} $t(X)$ is the smallest cardinal $\kappa$ such that for every subset $A\subset X$ and point $a\in\bar A$ there exists a subset $B\subset A$ of cardinality $|B|\le\kappa$ such that $a\in\bar B$. The following lemma was proved in \cite[2.3]{BB}.

\begin{lemma}\label{l:count} A function $f:X\to Y$ between topological spaces is scatteredly continuous if and only if  for any non-empty subset $A\subset X$ of cardinality $|A|\le t(X)$ the restriction $f|A$ has a continuity point.
\end{lemma}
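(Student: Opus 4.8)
The plan is to prove the nontrivial implication by contraposition, via a standard ``closing-off'' argument controlled by the tightness $\kappa:=t(X)$ (which we take, as usual, to be an infinite cardinal, so $\kappa\ge\w$). The forward implication is immediate from the definitions: if $f$ is scatteredly continuous, then the restriction $f|A$ has a continuity point for \emph{every} non-empty $A\subset X$, in particular for those with $|A|\le t(X)$.

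For the converse, suppose $f$ is not scatteredly continuous and fix a non-empty subset $A\subset X$ such that $f|A$ has no continuity point. The goal is to produce a non-empty $B\subset A$ with $|B|\le\kappa$ for which $f|B$ still has no continuity point, which contradicts the hypothesis. For each $a\in A$, the discontinuity of $f|A$ at $a$ yields an open neighborhood $V_a\subset Y$ of $f(a)$ with $a\in\overline{A\setminus f^{-1}(V_a)}$; then, by the definition of the tightness, choose a subset $C_a\subset A\setminus f^{-1}(V_a)$ with $|C_a|\le\kappa$ and $a\in\overline{C_a}$.

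Next, build an increasing chain $B_0\subset B_1\subset\cdots$ of subsets of $A$: start with $B_0=\{a_0\}$ for an arbitrary point $a_0\in A$, and put $B_{n+1}=B_n\cup\bigcup_{a\in B_n}C_a$. A straightforward induction (using $\kappa\ge\w$) gives $|B_n|\le\kappa$ for all $n$, so $B:=\bigcup_{n\in\w}B_n$ is a non-empty subset of $A$ with $|B|\le\kappa$. The crucial feature of $B$ is that it is closed under the assignment $a\mapsto C_a$: for any $b\in B$ we have $C_b\subset B$, and since $f(C_b)\cap V_b=\emptyset$ this gives $C_b\subset B\setminus f^{-1}(V_b)$, hence $b\in\overline{C_b}\subset\overline{B\setminus f^{-1}(V_b)}$. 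Thus the neighborhood $V_b$ of $f(b)$ witnesses that $f|B$ is discontinuous at $b$; since $b\in B$ was arbitrary, $f|B$ has no continuity point, the required contradiction.

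I do not expect a genuine obstacle here, as the construction is routine. The only points requiring mild care are the cardinal estimate $|B|\le\kappa$ (which is where infinitude of the tightness is used) and the bookkeeping showing that $C_b\subset B$ for every $b\in B$, which is precisely the reason the recursion must be iterated $\w$ times rather than applied once.
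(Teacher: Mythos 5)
Your closing-off argument is correct: the sets $C_a$ witness discontinuity, the $\omega$-step recursion keeps $|B|\le t(X)$ (under the standard convention that tightness is infinite, which you flag), and the closure property $C_b\subset B$ for all $b\in B$ ensures $f|B$ has no continuity point, giving the needed contradiction. The paper itself gives no proof of this lemma, only a citation to \cite[2.3]{BB}, and your construction is exactly the standard argument one would expect there, so there is nothing to correct.
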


\begin{lemma}\label{resolve} If $A,B$ are disjoint resolvable subsets of a topological space $X$, then $\bar A\cap \bar B$ is nowhere dense in $X$.
\end{lemma}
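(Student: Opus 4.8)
The plan is to derive the statement directly from the definition of resolvability, applied to the single closed set $F:=X$, and using the resolvability of only one of the two sets, say $A$. In fact all that is really needed is that the topological boundary $\bar A\cap\overline{X\setminus A}$ of $A$ is nowhere dense in $X$, which is the instance $F=X$ of the resolvability condition for $A$.

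The key steps, in order, would be the following. First, from the disjointness $A\cap B=\emptyset$ I note that $B\subseteq X\setminus A$, and hence, passing to closures, $\bar B\subseteq\overline{X\setminus A}$. Second, intersecting with $\bar A$ yields
$$\bar A\cap\bar B\ \subseteq\ \bar A\cap\overline{X\setminus A}\ =\ \overline{X\cap A}\cap\overline{X\setminus A}.$$
Third, since $X$ is a closed subset of itself, the resolvability of $A$ (applied with $F:=X$) says precisely that $\overline{X\cap A}\cap\overline{X\setminus A}$ is nowhere dense in $X$. Fourth, a subset of a nowhere dense subset of $X$ is again nowhere dense in $X$; combining with the inclusion above, $\bar A\cap\bar B$ is nowhere dense in $X$, as claimed.

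There is essentially no obstacle here: the only points that warrant a moment's attention are that $F=X$ is a legitimate choice in the definition of resolvability (so that ``nowhere dense in $F$'' reads as ``nowhere dense in $X$'') and the elementary hereditary property of nowhere density. It is worth remarking that the hypothesis is somewhat redundant, since we use neither the resolvability of $B$ nor the full strength of the resolvability of $A$; but the symmetric formulation stated in the lemma is the convenient one for the subsequent applications.
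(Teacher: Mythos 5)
Your proof is correct with respect to the definition of resolvability stated in the paper, and it takes a genuinely different and more elementary route. The paper argues by contradiction: assuming $\bar A\cap\bar B$ contains a non-empty open set $U$, it notes that $U\cap A$ and $U\cap B$ are dense in $U$, then applies resolvability of \emph{both} $A$ and $B$ to the closed set $F=\overline U$ to produce two disjoint dense open subsets of $\overline U$, which is impossible. You instead use only the single instance $F=X$ of the definition for one of the sets: disjointness gives $\bar B\subseteq\overline{X\setminus A}$, hence $\bar A\cap\bar B\subseteq\bar A\cap\overline{X\setminus A}=\partial A$, which is nowhere dense, and nowhere density is inherited by subsets. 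This is shorter and in fact proves a slightly stronger statement (only one of the two sets needs to be resolvable, and only its global boundary matters), whereas the paper's localized argument has the merit of being robust under weaker formulations of Kuratowski's resolvability in which only a relative, not global, smallness of the boundary is postulated for each closed $F$; with the definition as quoted in the paper (``$\overline{F\cap A}\cap\overline{F\setminus A}$ is nowhere dense in $F$ for every closed $F$''), your shortcut is perfectly legitimate.
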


\begin{proof} To derive a contradiction, assume that the set $F=\bar A\cap\bar B$ has a non-empty interior $U$ in $X$. Then $U\cap A$ and $U\cap B$ are two dense disjoint sets in $U$. By the resolvability of $A$, the dense subset $A\cap \bar U$ of $\bar U$ has nowhere dense boundary in $\bar U$. Consequently, the interior $U_A$ of the set $A\cap \bar U$ is dense in $\bar U$. By the same reason, the interior $U_B$ of the set $B\cap\bar U$ is dense in $\bar U$. Then the non-empty space $\bar U$ contains two disjoint dense open sets $U_A$ and $U_B$, which is not possible.
\end{proof}

A function $f:X\to Y$ between topological spaces is defined to be
{\em almost continuous} ({\em weakly continuous}) at a point $x\in X$ if for any neighborhood $Oy\subset Y$ of the point $y=f(x)$ the (interior of the) set $f^{-1}(Oy)$ is dense in some neighborhood of the point $x$ in $X$. By $AC(f)$ (resp. $WC(f)$~) we shall denote the set of point of almost (resp.  weak-) continuity of $f$.

\begin{lemma}\label{7.2} Let $f:X\to Y$ be a base-resolvable map from a topological space $X$ to a Hausdorff space $Y$. Then
\begin{enumerate}
\item $AC(f)=WC(f)$.
\item If $D$ is dense in $X$, $Y$ is regular, and $f|D$ has no continuity point, then $D\setminus AC(f)$ also is dense in $X$.
\item If $X$ has a countable $\pi$-base, then for any countable dense set $D\subset X$ there is a point $y\in f(D)$ such that for every neighborhood $Oy$ of $y$ the preimage $f^{-1}(Oy)$ has non-empty interior in $X$.
\item The family $\{\overline{f^{-1}(y)}^\circ:y\in Y\}$ is disjoint.
\end{enumerate}
\end{lemma}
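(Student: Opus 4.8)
The plan is to establish the four assertions one at a time, the basic tools being Lemma~\ref{resolve} together with two elementary facts about a resolvable subset $R$ of a topological space $X$: if $R^\circ=\emptyset$ then $R$ is nowhere dense (apply resolvability with $F=X$), and if $R$ is dense in $X$ then $X\setminus R$ is nowhere dense (again take $F=X$). Throughout, $\mathcal B$ denotes a countably additive base of $Y$ witnessing base-resolvability, and I will repeatedly use that $\mathcal B$ is a base (so every neighbourhood of a point contains a basic neighbourhood) and that preimages $f^{-1}(B)$, $B\in\mathcal B$, are resolvable in $X$.

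For (1), the inclusion $WC(f)\subseteq AC(f)$ is immediate. For the reverse inclusion I would fix $x\in AC(f)$, set $y=f(x)$, take an arbitrary neighbourhood $Oy$ of $y$, and choose $B\in\mathcal B$ with $y\in B\subseteq Oy$. Then $R:=f^{-1}(B)$ is resolvable and, by almost continuity, dense in some open neighbourhood $W$ of $x$. Applying the resolvability of $R$ to the closed set $\overline W$ (in which $\overline W\cap R$ is dense) shows that $\overline W\setminus R$ is nowhere dense in $\overline W$; hence the open set $W\setminus\overline{\overline W\setminus R}$ is dense in $W$ and contained in $R\subseteq f^{-1}(Oy)$, so it lies in the interior of $f^{-1}(Oy)$, witnessing $x\in WC(f)$. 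For (4), given $y_1\neq y_2$ I would use Hausdorffness of $Y$ and the fact that $\mathcal B$ is a base to pick disjoint $B_1,B_2\in\mathcal B$ with $y_i\in B_i$; then $f^{-1}(B_1)$ and $f^{-1}(B_2)$ are disjoint resolvable sets, so $\overline{f^{-1}(B_1)}\cap\overline{f^{-1}(B_2)}$ is nowhere dense by Lemma~\ref{resolve}, and since $\overline{f^{-1}(y_i)}^\circ\subseteq\overline{f^{-1}(B_i)}$, the open set $\overline{f^{-1}(y_1)}^\circ\cap\overline{f^{-1}(y_2)}^\circ$ sits inside a nowhere dense set and is therefore empty.

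For (2) I would argue by contradiction. Assuming $D\setminus AC(f)$ is not dense, fix a nonempty open $V\subseteq X$ with $V\cap D\subseteq AC(f)$. The key point, which reuses exactly the mechanism of (4), is that for every $z\in V\cap D$ and every $B\in\mathcal B$ with $f(z)\in B$, almost continuity gives $z\in U_B:=\overline{f^{-1}(B)}^\circ$, while Lemma~\ref{resolve} shows $U_B\cap U_{B'}=\emptyset$ whenever $B,B'\in\mathcal B$ are disjoint. Now pick $x_0\in V\cap D$ (nonempty, $D$ being dense); since $f|D$ is discontinuous at $x_0$, there is a neighbourhood $Oy_0$ of $y_0=f(x_0)$ with $f(D\cap W)\not\subseteq Oy_0$ for every neighbourhood $W$ of $x_0$, and by regularity of $Y$ one may choose $B_0\in\mathcal B$ with $y_0\in B_0$ and $\overline{B_0}\subseteq Oy_0$. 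Applying the displayed property to the neighbourhood $W_0:=U_{B_0}\cap V$ of $x_0$ yields $d\in D\cap W_0$ with $f(d)\notin\overline{B_0}$; choosing $B_d\in\mathcal B$ with $f(d)\in B_d\subseteq Y\setminus\overline{B_0}$ gives $B_d\cap B_0=\emptyset$, and yet $d\in U_{B_d}\cap U_{B_0}$, contradicting $U_{B_d}\cap U_{B_0}=\emptyset$.

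For (3), call $y\in Y$ \emph{good} if $f^{-1}(Oy)$ has nonempty interior for every neighbourhood $Oy$ of $y$, and call $B\in\mathcal B$ \emph{thin} if $f^{-1}(B)$ is nowhere dense. If $X$ has an isolated point $x$, then $f(x)$ is good (as $\{x\}$ lies in every $f^{-1}(Oy)$ with $Oy$ a neighbourhood of $f(x)$) and $x\in D$, so the conclusion holds; hence assume $X$ is crowded and argue by contradiction: if no $y\in f(D)$ is good, then for each $d\in D$ one may pick a thin $B_d\in\mathcal B$ with $f(d)\in B_d$ (shrink any bad neighbourhood to a basic set; resolvable with empty interior implies nowhere dense). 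Since $D$ is countable, countable additivity gives $B:=\bigcup_{d\in D}B_d\in\mathcal B$, so $f^{-1}(B)=\bigcup_{d\in D}f^{-1}(B_d)$ is resolvable; it contains the dense set $D$, hence is dense, hence has nowhere dense complement, so its interior $G_0$ is a nonempty dense open set contained in $f^{-1}(B)$. From here I would iterate the construction inside the countable $\pi$-base of $X$: the dense set $D\cap G_0$ again has all its $f$-values covered by thin basic sets contained in $B$, and repeating the union/interior step while choosing $\pi$-base elements along the way produces decreasing sequences of nonempty open sets and of basic sets which, by a bookkeeping argument, cannot continue indefinitely, yielding a contradiction with the density of $D$. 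I expect this last step — organizing the recursion so that it terminates, where the countable $\pi$-base genuinely substitutes for a Baire-category argument (note that $X$ need not be Baire) — to be the main obstacle in the whole lemma.
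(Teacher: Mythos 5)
Parts (1) and (4) of your proposal are correct and essentially identical to the paper's arguments. Your part (2) is also correct; it differs mildly in form (you argue by contradiction on an open set $V$ with $V\cap D\subset AC(f)$ and use the disjointness of the sets $U_B=\overline{f^{-1}(B)}^\circ$ for disjoint $B\in\mathcal B$, rather than passing through $WC(f)$ as the paper does), but the ingredients --- discontinuity of $f|D$, regularity of $Y$, and Lemma~\ref{resolve} --- are the same, and your version is, if anything, a bit cleaner.

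Part (3), however, has a genuine gap, and it sits exactly where the real work of the lemma lies. After forming $B=\bigcup_{d\in D}B_d$ and observing that $f^{-1}(B)$ is dense and resolvable with dense interior $G_0$, you have not reached any contradiction: a dense open $G_0\subset f^{-1}(B)$ is perfectly consistent with every $f^{-1}(B_d)$ being nowhere dense (as you yourself note, $X$ need not be Baire), and there is no reason the proposed recursion of union/interior steps along the $\pi$-base should terminate --- decreasing sequences of nonempty open sets and of basic sets can continue forever. The missing idea is not a terminating recursion but a splitting argument. Enumerate a countable $\pi$-base as $\{W_n\}_{n\in\omega}$ so that the even- and odd-indexed members each form a $\pi$-base, and inductively choose $x_n\in D\cap W_n\setminus\bigcup_{k<n}f^{-1}(U_k)$ (possible because a finite union of nowhere dense preimages is nowhere dense and $D$ is dense) together with a ``thin'' $U_n\in\mathcal B$ containing $f(x_n)$ and missing the finitely many earlier values $f(x_k)$, $k<n$ (possible by Hausdorffness: $x_n\notin f^{-1}(U_k)$ while $f(x_k)\in U_k$, so $f(x_n)\ne f(x_k)$). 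These two conditions give $f(x_m)\notin U_n$ for all $m\ne n$. Then $\{x_{2n}\}_{n\in\omega}$ and $\{x_{2n+1}\}_{n\in\omega}$ are both dense in $X$, and by countable additivity $U_e=\bigcup_{n}U_{2n}\in\mathcal B$, so $f^{-1}(U_e)$ is resolvable, contains the dense set $\{x_{2n}\}_{n\in\omega}$, and yet its complement contains the dense set $\{x_{2n+1}\}_{n\in\omega}$ --- contradicting resolvability (equivalently, Lemma~\ref{resolve} applied to $f^{-1}(U_e)$ and its complement). Without some such two-sided density mechanism, your sketch of (3) does not close.
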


\begin{proof} Since $f$ is basic-resolvable, there exists a countably additive base $\mathcal B$ of the topology of $Y$ such that for every $U\in\mathcal B$ the preimage $f^{-1}(U)$ is resolvable in $X$.
\smallskip

1. The inclusion $WC(f)\subset AC(f)$ is trivial. To prove that $AC(f)\subset WC(f)$, take any point $x\in AC(f)$. To show that $x\in WC(f)$, take any neighborhood $Oy\in\mathcal B$ of the point $y=f(x)$ and consider the preimage $f^{-1}(Oy)$. Since $x\in AC(f)$, the closure $F=\overline{f^{-1}(Oy)}$ is a  neighborhood of $x$. Since the set $f^{-1}(Oy)$ is resolvable, the boundary
 $\overline{F\cap f^{-1}(Oy)}\cap \overline{F\setminus f^{-1}(Oy)}$ is nowhere dense in $F$. Consequently, the interior of the set $F\cap f^{-1}(O_y)$ in $F$ is dense in $F$ and $x\in WC(f)$.
\smallskip

2. Assume that $D\subset X$ is dense, $Y$ is regular, and $f|D$ has no continuity point. Given a point $x\in D$, and a neighborhood $O_x\subset X$ of $x$ we should find a point $x'\in O_x\cap D\setminus AC(f)$. If $x\notin AC(f)$, then we can take $x'=x$. So we assume that $x\in AC(f)$ and hence $x\in WC(f)$ by the preceding item. Since $x$ is a discontinuity point of $f|D$, there is a neighborhood $O_{f(x)}$ of $f(x)$ such that $f(D\cap U_x)\not\subset O_{f(x)}$ for every neighborhood $U_x$ of $x$. Using the regularity of $Y$ choose a neighborhood $U_{f(x)}\subset Y$ of $f(x)$ with $\overline{U}_{f(x)}\subset O_{f(x)}$. Since $f$ is  weakly continuous at $x$, the closure  of the interior  of the preimage $f^{-1}(U_{f(x)})$ contains some open neighborhood $W_x$ of $x$. By the choice of $O_{f(x)}$, we can find a point $x'\in D\cap O_x\cap W_x$ with $f(x')\notin O_{f(x)}$. Consider the neighborhood $O_{f(x')}=Y\setminus \overline{U}_{f(x)}$ of $f(x')$ and observe that $W_x\cap f^{-1}(O_{f(x')})$ is a nowhere dense subset of $Ox$ (because it misses the interior of $f^{-1}(U_{f(x)})$ which is dense in $W_x$). This witnesses that $x'\notin AC(f)$.
\smallskip

3. Assume that $X$ has countable $\pi$-base $\{W_n\}_{n\in\w}$. We lose no generality assuming that the subfamilies $\{W_{2n}\}_{n\in\w}$ and $\{W_{2n+1}\}_{n\in\w}$ are countable $\pi$-bases in $X$.
Given a countable dense subset $D\subset X$, we should find a point $y\in f(D)$ such that for every neighborhood $Oy\subset Y$ the preimage $f^{-1}(Oy)$ has non-empty interior in $X$. Assume conversely that each point $y\in f(D)$ has a neighborhood $O_y\in\mathcal B$ such that the preimage $f^{-1}(O_y)$ has empty interior in $X$. The resolvability of $f^{-1}(O_y)$ implies that this set is nowhere dense in $X$.
We shall inductively construct a sequence $(x_n)_{n\in\w}$ of points of $D$ and a sequence $(U_n)_{n\in\w}$ of open sets in $Y$ such that
\begin{itemize}
\item[(a)] $f(x_n)\in U_n\in\mathcal B$ and the set $f^{-1}(U_n)$ is nowhere dense in $X$;
\item[(b)] $x_n\in D\cap W_{n}\setminus\bigcup_{k<n}f^{-1}(U_n)$;
\item[(c)] $U_n\cap\{f(x_k)\}_{k<n}=\emptyset$.
\end{itemize}

Taking into account that $\{W_{2n}\}_{n\in\w}$ and $\{W_{2n+1}\}_{n\in\w}$ are $\pi$-bases in $X$, we conclude that the disjoint sets $\{x_{2n}\}_{n\in\w}$ and $\{x_{2n+1}\}_{n\in\w}$ are dense in $X$.
The countable additivity of the base $\mathcal B$ guarantees that the open sets
$U_{e}=\bigcup_{n\in\w}U_{2n}$ and $U_o=\bigcup_{n\in\w}U_{2n+1}$ belong to $\mathcal B$. Then their preimages $f^{-1}(U_e)\supset\{x_{2n}\}_{n\in\w}$ and $f^{-1}(U_e)\supset\{x_{2n+1}\}_{n\in\w}$ are disjoint dense resolvable sets in $X$. But this contradicts Lemma~\ref{resolve}.
\smallskip

4. Assuming that the family $\{\overline{f^{-1}(y)}^\circ:y\in Y\}$ is not disjoint, find two distinct points $y,z\in Y$ such that the intersection
$$W=\overline{f^{-1}(y)}^\circ\cap \overline{f^{-1}(z)}^\circ$$is not empty.
Observe that the sets $W\cap f^{-1}(y)$ and $W\cap f^{-1}(z)$ both are dense in $W$.

By the Hausdorff property of $Y$ the points $y,z$ have disjoint open neighborhoods $Oy,Oz\in\mathcal B$.  The choice of $\mathcal B$ guarantees that the sets $f^{-1}(Oy)$ and $f^{-1}(Oz)$ are resolvable. By Lemma~\ref{resolve}, the intersection $\overline{f^{-1}(Oy)}\cap\overline{f^{-1}(Oz)}$ is nowhere dense in $X$, which is not possible as this intersection contains the non-empty open set $W$.
\end{proof}

\begin{lemma}\label{7.2a} Let $f:X\to Y$ be a base-resolvable map from a topological space $X$ to a regular  space $Y$ and $D$ be a countable dense subset of $X$ such that $f|D$ has no continuity point.
\begin{enumerate}
\item For any finite subset $F\subset Y$ there is a dense subset $Q\subset D\setminus f^{-1}(F)$ in $X$ such that $f|Q$ has no continuity point.
\item If $X$ has a countable $\pi$-base, then for any sequence $(U_n)_{n=1}^\infty$ of non-empty open subsets of $X$ there are an infinite subset $I\subset\IN$ and sequences $(V_n)_{n\in I}$ and $(W_n)_{n\in I}$ of pairwise disjoint non-empty open sets in $X$ and $Y$, respectively, such that  $V_n\subset U_n\cap f^{-1}(W_n)$ for all $n\in I$.
\item If $D\subset PS(X)$, then there is a countable first countable subspace $Q\subset D$ such that $Q$ contains no finite non-empty open subsets and the restriction $f|Q$ is a bijective map whose image $f(Q)$ is a discrete subspace of $Y$.
\end{enumerate}
\end{lemma}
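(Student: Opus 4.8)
For item (1), the plan is to combine Lemma~\ref{7.2}(2) and Lemma~\ref{7.2}(4) with the observation that every fibre $f^{-1}(y)$ has empty interior (otherwise $f|D$ would be continuous at any point of $D$ in that interior). Set $D_0:=D\setminus AC(f)$, dense in $X$ by Lemma~\ref{7.2}(2), and for $y\in F$ put $W_y:=\overline{f^{-1}(y)}^\circ$. By Lemma~\ref{7.2}(4) the sets $W_y$ are pairwise disjoint; for $z\in F\setminus\{y\}$ the set $f^{-1}(z)\cap W_y$ is nowhere dense in $W_y$ (else the interior of its closure would lie in the empty set $W_y\cap W_z$); moreover $W_y\cap f^{-1}(y)\subseteq AC(f)$ since $f^{-1}(y)$ is dense in $W_y$, so $D_0$ misses $W_y\cap f^{-1}(y)$; and outside $\overline{\bigcup_{y\in F}W_y}$ every $f^{-1}(y)$, hence $f^{-1}(F)$, is nowhere dense (as $f^{-1}(y)\setminus\overline{W_y}$ is nowhere dense in $X$). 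I would then let $Q$ be the union of $D_0\cap\big(X\setminus\overline{\bigcup_{y\in F}W_y}\big)\setminus f^{-1}(F)$ and the sets $D_0\cap W_y\setminus\bigcup_{z\in F\setminus\{y\}}f^{-1}(z)$ for $y\in F$. Each summand is the intersection of the dense set $D_0$ with a dense open subset of the relevant open set, the union of those open sets is dense in $X$, so $Q$ is dense in $X$; it is contained in $D\setminus f^{-1}(F)$; and since $Q\subseteq D_0$ is disjoint from $AC(f)$ and dense, a continuity point of $f|Q$ would lie in $AC(f)$, so $f|Q$ has none.

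For item (2), fix a countable $\pi$-base of $X$ and note that for every open $U\subseteq X$ the restriction $f|U$ is base-resolvable and $U$ has a countable $\pi$-base, so Lemma~\ref{7.2}(3) applies to $f|U$. I would process $U_1,U_2,\dots$ in turn: given a dense $D^{(n-1)}\subseteq D$ with $f|D^{(n-1)}$ without continuity points ($D^{(0)}=D$), apply Lemma~\ref{7.2}(3) to $f|U_n$ and the dense set $D^{(n-1)}\cap U_n$ to obtain $y_n\in f(D^{(n-1)}\cap U_n)$ with $\Int\big(f^{-1}(W)\cap U_n\big)\ne\emptyset$ for every neighbourhood $W\ni y_n$, then use item (1) with $F=\{y_n\}$ to get a dense $D^{(n)}\subseteq D^{(n-1)}\setminus f^{-1}(y_n)$ with no continuity points. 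The $y_n$ are then pairwise distinct. A countably infinite set of distinct points of the regular space $Y$ is either closed discrete or has an accumulation point, and in either case one extracts an infinite $I\subseteq\IN$ and pairwise disjoint open sets $W_n\ni y_n$, $n\in I$ (separate $y_n$ from the remaining points and thin the neighbourhoods; or split off along a subsequence small neighbourhoods whose closures miss the accumulation point). Finally put $V_n:=\Int\big(f^{-1}(W_n)\cap U_n\big)$ for $n\in I$: it is nonempty as $W_n$ is a neighbourhood of $y_n$, lies in $U_n\cap f^{-1}(W_n)$, and $(V_n)_{n\in I}$ is pairwise disjoint because $(W_n)_{n\in I}$ is.

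For item (3), I would build $Q$ as a ``tree of convergent sequences''. Recursively, over the nodes $s$ of a subtree of $\w^{<\w}$ in which each node has infinitely many successors, I pick $x_s\in D$, an open $G_s\ni x_s$ in $X$, and an open $N_s\ni f(x_s)$ in $Y$, so that: the $x_s$ are distinct and the $f(x_s)$ are distinct (maintained by using item (1) at every step to discard the finitely many already used values, always inside $D\setminus AC(f)$); for each $s$ the $G_{s\concat n}$ are pairwise disjoint, contained in $G_s$, converge to $x_s$, and satisfy $x_s\notin\overline{G_{s\concat n}}$, while $f(Q\cap G_{s\concat n})\subseteq N_{s\concat n}$; and dually the $N_{s\concat n}$ are pairwise disjoint, contained in $N_s$, with closures missing a fixed smaller neighbourhood $N_s'\ni f(x_s)$ (using regularity of $Y$). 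The successor step rests on the Preiss--Simon property at $x_s$, applied to the current dense set with $x_s$ removed, which yields a sequence of nonempty relatively open sets converging to $x_s$; from their ambient open hulls one carves the $G_{s\concat n}$ and selects $x_{s\concat n}$, the choice of the values being governed by item (1) (and, where available, the localization of item~(2)). Then $Q:=\{x_s\}$ has no finite nonempty open subset, because any open set meeting $Q$ contains some $x_s$ and hence cofinitely many of the $x_{s\concat n}\to x_s$; it is first countable since, for each $s$, the sets $\{x_s\}\cup\bigcup_{n\ge k}(Q\cap G_{s\concat n})$, $k\in\w$, form a neighbourhood base at $x_s$ in $Q$ (this is exactly where $x_s\notin\overline{G_{s\concat n}}$ and $G_{s\concat n}\subseteq G_s$ enter); $f|Q$ is injective; and $f(Q)$ is discrete in $Y$ since, dually, the $N_s$ localise the values along the tree and $N_s'$ isolates $f(x_s)$ in $f(Q)$.

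The work in items (1) and (2) and the bookkeeping of distinct values and discreteness of the image (carried by regularity of $Y$) is essentially routine. The main obstacle is in item (3): securing first countability of $Q$ while $X$ is only Preiss--Simon, not regular --- that is, arranging the branch neighbourhoods $G_{s\concat n}$ to converge to $x_s$ while keeping $x_s$ out of their closures and separating distinct branches. I expect this to require choosing the $G_{s\concat n}$ inside preimages of the well-separated value neighbourhoods furnished by regularity of $Y$ and base-resolvability of $f$, together with a careful exploitation of the Preiss--Simon sequences, and it is the step I would expect to take the most care.
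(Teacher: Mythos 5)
Your items (1) and (2) are essentially in order. Item (2) follows the paper's own route (Lemma~\ref{7.2}(3) applied to $f|U_n$, item (1) to keep the values $y_n$ pairwise distinct, then an infinite discrete subfamily with pairwise disjoint neighbourhoods via regularity of $Y$). Item (1) is a genuinely different and in fact shorter argument than the paper's: the paper argues by induction on $|F|$, removing one point of $F$ at a time and excluding continuity points of $f|Q$ through a contradiction with Lemma~\ref{resolve}, whereas you pass at once to $D_0=D\setminus AC(f)$ (dense by Lemma~\ref{7.2}(2)), use Lemma~\ref{7.2}(4) for the disjointness of the sets $\overline{f^{-1}(y)}^\circ$, $y\in F$, and finish with the observation that a continuity point of the restriction of $f$ to a dense set lies in $AC(f)$. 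Your density bookkeeping (removing traces of fibres that are nowhere dense in the relevant open pieces) checks out, so this gives a clean non-inductive proof of (1).

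Item (3), however, has a genuine gap, and it is not the one you flag. Your invariants are the right ones (the $N_{s\concat n}$ pairwise disjoint, inside $N_s$, with closures missing a smaller neighbourhood $N_s'$ of $f(x_s)$), but nothing in your successor step can secure them. You apply the Preiss--Simon property to ``the current dense set with $x_s$ removed'' and then govern the values by items (1)--(2) and regularity of $Y$. Item (1) only excludes finitely many exact values; item (2) produces pairwise disjoint value windows with no control on their position relative to $f(x_s)$; and regularity of $Y$ cannot separate $N_{s\concat n}$ from $N_s'$ if the points $f(x_{s\concat n})$ themselves converge to $f(x_s)$ --- which your construction does not rule out. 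In that case $f(x_s)$ is not isolated in $f(Q)$, so the discreteness of $f(Q)$, and with it the injectivity and your first-countability argument, collapses.

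The missing idea is precisely where $x_s\in D\setminus AC(f)$ is used in an essential way, not merely as a pool of admissible points: since $f|V_s$ is not almost continuous at $x_s$ and $Y$ is regular, there is a neighbourhood $W_s\subset U_s$ of $f(x_s)$ such that $\cl_X\big(f^{-1}(\overline{W_s})\big)$ is not a neighbourhood of $x_s$; the Preiss--Simon property at $x_s$ is then applied to the set $V_s\setminus \cl_X\big(f^{-1}(\overline{W_s})\big)$ --- not to the ambient dense set with $x_s$ removed --- producing open sets $V'_k$ converging to $x_s$ all of whose points have values outside $\overline{W_s}$. Only after this does one invoke item (2) along the sequence $(V'_k)$ to obtain disjoint windows $U'_k$, and one sets $U_{s\concat n}=U'_{k_n}\setminus\overline{W_s}$ and takes for $V_{s\concat n}$ a non-empty open subset of $f^{-1}(U'_{k_n})\cap V'_{k_n}$, whose points automatically have values off $\overline{W_s}$. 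This is what forces every descendant of $s$ to take its value outside $\overline{W_s}$ and hence makes $f|Q$ injective and $f(Q)$ discrete; your plan has no substitute for it. (Your auxiliary requirement $x_s\notin\overline{G_{s\concat n}}$ is also doubtful as stated: $X$ is not assumed regular, so one cannot shrink open sets in $X$ to have small closures; in the paper the separation of $x_s$ from the points of $Q$ outside the subtree above $s$ is again routed through the value side, via $f(V_s)\subset U_s$ and the disjointness conditions on the sets $W_s$, $U_{s\concat n}$.)
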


\begin{proof} Fix a countably additive base $\mathcal B$ of the topology of $Y$ such that for every $U\in\mathcal B$ the preimage $f^{-1}(U)$ is resolvable in $X$.
\smallskip

1. The first statement will be proved by induction on the cardinality $|F|$ of the set $F$. If $|F|=0$, then we can put $Q=D$ and finish the proof. Assume that for some $n>0$ the first statement is proved for all sets $F\subset Y$ of cardinality $|F|<n$. Take any finite subset $F\subset Y$ of cardinality $|F|=n$. Choose any point $y\in F$. By the inductive hypothesis, for the set $F\setminus\{y\}$ there exists a dense subset $E\subset D\setminus f^{-1}(F\setminus\{y\})$ such that the function $f|E$ has no continuity points.
We claim that the set $E\setminus f^{-1}(y)$ is dense in $X$. In the opposite case, there exists a non-empty open set $U\subset X$ such that $E\cap U\subset f^{-1}(y)$. It follows that $E\cap U\subset AC(f)$. By Lemma~\ref{7.2}(2), the set $E\setminus AC(f)\subset E\setminus U$ is dense in $X$, which is a desired contradiction showing that the set $E\setminus f^{-1}(y)$ is dense in $X$ and so is the set
$Q:=(E\setminus f^{-1}(y))\setminus\big(\overline{E\cap f^{-1}(y)}\setminus\overline{E\cap f^{-1}(y)}^\circ\big)\subset D\setminus f^{-1}(F).$

It remains to check that the restriction $f|Q$ has no continuity points. To derive a contradiction, assume that some point $x_0\in Q$ is a continuity point of the restriction $f|Q$. If $x_0\notin\overline{E\cap f^{-1}(y)}$, then the discontinuity of the map $f|E$ at $x_0$ implies the discontinuity of $f|Q$ at $x_0$. So, $x_0$ belongs to the interior $\overline{E\cap f^{-1}(y)}^\circ$ of $\overline{E\cap f^{-1}(y)}$.

Let $y_0=f(x_0)$ and observe that $y_0\ne y$ (because $x_0\notin f^{-1}(y)$). By the Hausdorff property of $Y$ the points $y_0$ and $y$ have disjoint open neighborhoods $Oy_0,Oy\in\mathcal B$.
By the continuity of $f|Q$ at $x_0$, there is an open neighborhood $Ox_0\subset \overline{E\cap f^{-1}(y)}^\circ$ of $x_0$ such that $f(Ox_0\cap Q)\subset Oy_0$. It follows that the preimages
 $f^{-1}(Oy_0)$ and $f^{-1}(Oy)$ are disjoint resolvable subsets of $X$. The density of the set $Q$ in $X$  implies the density of the set
$Ox_0\cap f^{-1}(Oy_0)\supset Ox_0\cap Q$ in $Ox_0$.
On the other hand, the intersection $f^{-1}(y)\cap \overline{E\cap f^{-1}(y)}^\circ$ is dense in $\overline{E\cap f^{-1}(y)}^\circ$ and hence $Ox_0\cap f^{-1}(Oy)\supset f^{-1}(y)\cap Ox_0$ is dense in $Ox_0$. Therefore, $\overline{f^{-1}(Oy_0)}\cap\overline{f^{-1}(Oy)}$ contains the non-empty open set $Ox_0$.
But this contradicts Lemma~\ref{resolve}.
\smallskip

2. Assume that the space $X$ has a countable $\pi$-base and let $(U_n)_{n=1}^\infty$ be a sequence of non-empty open subsets of $X$.
Applying Lemma~\ref{7.2}(3) to the map $f|U_1$ and the dense subset $D\cap U_1$, find a point $y_0\in f(D\cap U_1)$ such that for each neighborhood $Oy_0$ the preimage $U_1\cap f^{-1}(Oy_0)$ has non-empty interior.
By induction, for every $n\in\IN$ we shall find a point $y_n\in f(D)\setminus \{y_i:i<n\}$ such that for every neighborhood $Oy_n$ the set $U_n\cap f^{-1}(Oy_n)$ has non-empty interior.

Assuming that for some $n$ the points $y_0,\dots,y_{n-1}$ have being chosen, we shall find a point $y_n$. It follows that the intersection $D\cap U_n$ is a countable dense subset of $U_n$ such that $f|D\cap U_n$ has no continuity point. Applying Lemma~\ref{7.2a}(1), we can find a dense subset $Q\subset D\cap U_n\setminus f^{-1}(\{y_0,\dots,y_{n-1}\})$ in $U_n$ such that the restriction $f|Q$ has no continuity point. Applying Lemma~\ref{7.2}(3) to the map $f|U_n$ and the dense subset $Q\cap U_n$ of $U_n$, find a point $y_n\in f(Q)\subset f(D)\setminus\{y_i:i<n\}$ such that for each neighborhood $Oy_n$ the preimage $U_n\cap f^{-1}(Oy_n)$ has non-empty interior.
This completes the inductive construction.
\smallskip

The space $\{y_n:n\in\IN\}$, being infinite and regular, contains an infinite discrete subspace $\{y_n:n\in I\}$. By induction, we can select pairwise disjoint open neighborhoods $W_n\subset Y$, $n\in I$, of the points $y_n$. For every $n\in I$, the choice of the point $y_n$ guarantees that the set $U_n\cap f^{-1}(W_n)$ contains a non-empty open set $V_n$. Then the set $I\subset\IN$ and sequences $(V_n)_{n\in I}$, $(W_n)_{n\in I}$ satisfy our requirements.
\smallskip

3. Assume that $D\subset PS(X)$. Using the density of the countable set $D$ and the inclusion $D\subset PS(X)$, we can show that the space $X$ has a countable $\pi$-base. Applying Lemma~\ref{7.2}(2), we get that $D\setminus AC(f)$ is dense in $X$.

By induction on the tree $\w^{<\w}$ we shall construct sequences $(x_s)_{s\in\w^{<\w}}$ of points of the set $D\setminus AC(f)$, and sequences $(V_s)_{s\in \w^{<\w}}$ and $(U_s)_{s\in\w^{<\w}}$, $(W_s)_{s\in\w^{<\w}}$ of sets so that the following conditions hold for every finite number sequence $s\in\w^{<\w}$:
\begin{itemize}
\item[(a)] $V_s$ is an open neighborhood of the point $x_s$ in $X$;
\item[(b)] $W_s\subset U_s$ are open neighborhoods of $f(x_s)$ in $Y$;
\item[(c)] $f(V_s)\subset U_s$;
\item[(d)] $V_{s\concat n}\subset V_s$ and $U_{s\concat n}\subset U_s$ for all $n\in\w$;
\item[(e)] the sequence $(V_{s\concat n})_{n\in\IN}$ converges to $x_s$;
\item[(f)] $W_{s}\cap U_{s\concat n}=\emptyset=U_{s\concat n}\cap U_{s\concat m}$ for all $n\ne m$ in $\w$.
\end{itemize}

We start the induction letting $V_\emptyset=X$, $U_\emptyset=Y$ and $x_\emptyset$ be any point of $D\setminus AC(f)$.

Assume that for a finite sequence $s\in\w^{<\w}$ the point $x_s\in D\setminus AC(f)$
and open sets $V_s\subset X$ and $U_s\subset Y$ with $x_s\in V_s$ and $f(V_s)\subset U_s$ have been constructed.  Since $f|V_s$ fails to be almost continuous at $x_s$, there is a neighborhood $W_s\subset U_s$ of $f(x_s)$ such that the closure of the preimage $f^{-1}(\overline{W_s})$ is not a neighborhood of $x_s$ in $X$. This fact and the Preiss-Simon property of $X$ at $x_s$ allows us to construct a sequence $(V'_{k})_{k\in\w}$ of open subsets of $V_s\setminus \cl_X\big(f^{-1}(\overline{W_s})\big)$ that converges to $x_s$ in the sense that each neighborhood of $x$ contains all but finitely many sets $V'_{k}$. Applying Lemma~\ref{7.2a}(2) to the map $f|V_s:V_s\to U_s$,
we can find an infinite subset $N\subset \w$ and a sequence $(U'_{k})_{k\in N}$ of pairwise disjoint open sets of $U_s$ such that each set $f^{-1}(U'_k)\cap V'_k$, $k\in N$, has non-empty interior in $X$. Let $N=\{k_n:n\in\w\}$ be the increasing enumeration of the set $N$.

For every $n\in\w$ let $U_{s \concat n}=U'_{k_n}\setminus\overline{W_s}$, $V_{s\concat n}$ be a non-empty open subset in $f^{-1}(U_{k_n})\cap V_{k_n}'$ and $x_{s\concat n}\in V_{s\concat n}\cap D\setminus AC(f)$ be any point (such a point exists because of the density of $D\setminus AC(f)$ in $X$). One can check that the points $x_{s\concat n}$, ${n\in\w}$ and sets $W_s$, $V_{s\concat n}$, $U_{s\concat n}$, $n\in\w$ satisfy the requirements of the inductive construction.

After completing the inductive construction, consider the set $Q=\{x_s:s\in\w^{<\w}\}$ and note that it is first countable, contains no non-empty finite open subsets, $f|Q$ is bijective and $f(Q)$ is a discrete subspace of $Y$.
\end{proof}

\section{Proof of Theorem~\ref{main}}\label{s:pf}

Given a function $f:X\to Y$ from a Preiss-Simon space $X$ to a regular space $Y$ we need to prove the equivalence of the following conditions:
\begin{enumerate}
\item $f$ is weakly discontinuous;
\item $f$ is resolvable;
\item $f$ is open-resolvable.
\item $f$ is base-resolvable.
\end{enumerate}

The implication $(1)\Ra(2)$ follows from Proposition~\ref{p1} and $(2)\Ra(3)\Ra(4)$ are trivial. To prove that $(4)\Ra(1)$, assume that the function $f$ is base-resolvable but not weakly discontinuous. By Lemma~\ref{l:scat}, $f$ is not scatteredly continuous. The space $X$, being Preiss-Simon, has countable tightness. Then Lemma~\ref{l:count} yields a non-empty countable set $D\subset X$ such that the restriction $f|D$ has no continuity points.   Applying
Lemma~\ref{7.2a}(3) to the restriction $f|D$, we can find a countable first-countable subset $Q\subset D$ without finite open sets such that $f|Q$ is bijective and $f(Q)$ is a discrete subspace of $Y$. It is clear that $f|Q$ has no continuity point. The space $X$ being second countable and without finite open sets, can be written as the union $Q=Q_1\cup Q_2$ of two disjoint dense subsets of $Q$.

Let $\mathcal B$ be a countably additive base for $Y$ such that for every $B\in\mathcal B$ the preimage $f^{-1}(B)$ is resolvable in $X$. Since the set $f(Q)$ is countable and discrete, for every $x\in Q$ we can select a neighborhood $O_{f(x)}\in\mathcal B$ of $f(x)$ so small that the family $\{O_{f(x)}:x\in Q\}$ is disjoint. The countable additivity of the base $\mathcal B$ implies that for every $i\in\{1,2\}$ the set $W_i=\bigcup_{x\in Q_i}O_{f(x)}$ belongs to $\mathcal B$. Consequently, the preimage $f^{-1}(W_i)$ is a resolvable subset of $X$ and hence $\bar Q\cap f^{-1}(W_i)\supset Q_i$ is a dense resolvable subset of $\bar Q$. So, the space $\bar Q$ contains two disjoint dense resolvable subsets $\bar Q\cap f^{-1}(W_1)$ and  $\bar Q\cap f^{-1}(W_2)$, which contradicts Lemma~\ref{resolve}. This contradiction completes the proof of the implication $(4)\Ra(1)$.

\end{document}